\newcommand{\R}{\mathbb{R}}
\newcommand{\adb}{\allowdisplaybreaks}
\newcommand{\bs}{\boldsymbol}
\newtheorem{Theorem}{Theorem}
\newtheorem{Corollary}{Corollary}
\newcommand{\inv}{\frac{1}}
\newcommand{\bosy}{\boldsymbol}
\date{21-02-2015}
\title{Tail dependence convergence rate for the bivariate skew normal under the equal-skewness condition}
\author[1]{Thomas Fung\thanks{Corresponding Author. Honorary Associate, University of Sydney. Email address: thomas.fung@mq.edu.au.}}
\author[2]{Eugene Seneta}
\affil[1]{Department of Statistics, Macquarie University, NSW 2109, Australia}
\affil[2]{School of Mathematics and Statistics,  University of Sydney, NSW 2006, Australia}
\begin{document}
\date{\today}

\maketitle
\begin{abstract}
\noindent We derive the rate of decay of the tail dependence of the bivariate
skew normal distribution under the equal-skewness condition $\theta_1=\theta_2, = \theta$, say.  The rate of convergence depends on whether $\theta >0$ or $\theta <0$. The latter case gives rate asymptotically identical with the case $\theta =0$. The asymptotic behaviour of the quantile function for the univariate skew normal is part of the theoretical development.
\vskip.3cm
\noindent \emph{Keywords: Asymptotic tail dependence coefficient; bivariate skew normal distribution; convergence rate; quantile function.}  
\end{abstract}



\section{Introduction}
The coefficient of lower tail dependence of a random vector $\textbf{X} =
(X_1,X_2)^T$ with marginal inverse distribution function $F_1^{-1}$ and
$F_2^{-1}$ is defined as
\begin{equation}
\lambda_L = \lim_{u \rightarrow 0^+}\lambda_L(u), \quad \text{where} \quad \lambda_L(u) = P(X_1 \leq F_1^{-1}(u) | X_2
\leq F_2^{-1}(u)).\label{defn:tail dependence}
\end{equation}
$\textbf{X}$ is said to have asymptotic lower  tail dependence if $\lambda_L$ exists and is positive. If $\lambda_L=0$, then $\textbf{X}$ is said to be
asymptotically independent in the lower tail. 
This quantity provides insight on
the tendency for the distribution to generate joint extreme event since it
measures the strength of dependence (or association) in the lower tails of a
bivariate distribution.  If the marginal distributions of these random variables are
continuous, then from (\ref{defn:tail dependence}), it follows that
$\lambda_L(u)$ can be expressed in terms of  the copula of
$\textbf{X}$, $C(u_1,u_2)$, as
\begin{equation}
\lambda_L(u) = \frac{P(X_1\leq F_1^{-1}(u), X_2\leq F_2^{-1}(u))}{P(X_2 \leq F_2^{-1}(u))} =  \frac{C(u,u)}{u}.
\label{defn:tail dependence 3}
\end{equation}
 \citet{ramo2009}, continuing the work of \citet{ledf1997}, studied intensively a family of bivariate distributions (which they characterised) which satisfied in particular the condition
\begin{equation}
\lambda_L(u) = u^{\inv{\alpha}-1} L(u) \label{RL condition}
\end{equation}
where $L(u)$ is a slowly varying function (SVF) as $u\to 0^+$, and $\alpha\in (0,1]$, so that, in fact, the value of $\alpha$ could be used for comparison of the degree of tail dependence structure between members of the family. The standard bivariate extreme value models correspond to $\alpha=1.$ 

\citet{hua2011} developed this idea further and defined $\kappa = 1/\alpha$ in (\ref{RL condition}) as the (lower) tail order of a copula. The case $1 < \kappa<2$ is termed as intermediate tail dependence as it represents the copula has some level of positive dependence in the tail but not as strong as tail dependence with $\lambda_L=0$. The tail order $\kappa$ can be used to assess tail dependence strength when $\lambda_L=0$.

The standard bivariate normal with correlation coefficient $-1<\rho<1$ corresponds to $\alpha = \frac{1+\rho}{2}$ in (\ref{RL condition}), and hence is an instance of intermediate tail dependence.  A  recent  manifestation of this  more or less known  result  is in \citet{fung2011}, where it is shown that 
\begin{equation*}
L(u) \sim 2 \sqrt{\frac{1+\rho}{1-\rho}} (-4\pi \log u)^{-\frac{\rho}{1+\rho}}, \quad \text{as $u\to 0^+$}.
\end{equation*}

The bivariate skew normal distribution was introduced in Azzalini and Dalla Valle (1996) (which is discussed further in Azzalini and Capitanio (1999)). A random vector $\textbf{X}$ is said to have a bivariate skew normal distribution distribution, denoted as $\textbf{X} \sim SN_2(\bs{\theta}, R)$, 
if the probability density of $\textbf{X}$ is  
\begin{equation}
f(\textbf{x}) = 2\phi_2(\textbf{x},R)\Phi(\bs{\theta}^T\textbf{x}), \label{pdf:SN_n}
\end{equation}
where $\phi(\cdot,R)$ is density of a bivariate normal distribution with mean $\textbf{0}$ and correlation matrix $R$ and $\Phi(\cdot)$ is the cdf of a univariate standard normal distribution. The correlation matrix $R$ and skew vector $\bs{\theta}$ are defined as $\left(\begin{smallmatrix} 1 & & \rho \\ \\ \rho & & 1 \end{smallmatrix}\right)$, with $-1<\rho<1$ and $\bs{\theta} =(\theta_1,\theta_2)^T \in \R^2$ respectively. Obviously, the (symmetric) normal is obtained as special case when $\bs{\theta}=0$. 

The results of Lysenko, Roy and Waeber (2009), Bortot (2010) and Padoan  (2011) show  that the skew normal distribution is tail independent, that is: $\lambda_L = \lim_{u \rightarrow 0^+}\lambda_L(u) =0.$ 

The focus of this current note is  thus to consider (\ref{RL condition}) in the setting of the skew normal distribution.  

We were motivated to do this not only  by  interest in generalizing to bivariate skew normal   the  result for the bivariate normal, but also by the following. 
\citet{fung2014} chose to view results such as  (\ref{RL condition}) as a rate of convergence result to the limit value
 $\lambda_L=0$. The skew $t$ distribution  is tail dependent i.e. $\lambda_L>0$ and \citet{fung2014} obtained for it
the rate of convergence result:
\begin{equation}\label{tcvgce}
\lambda_L(u) - \lambda_L   = \mathcal{K}(\eta, R,\bs{\theta} ) u^{\frac{2}{\eta}}+O(u^{\frac{4}{\eta}})
\end{equation}
as $u\to 0^+$, where $\mathcal{K}(\eta, R,\bs{\theta})$ is a constant which  depends on the distribution's degrees of freedom $\eta$, scale matrix $R$ and skewness vector $\bs{\theta}. $ This generalised the results of \citet{mann2011} and \citet{chic2012}, who had taken the rate of convergence standpoint and showed its practical importance, for the (symmetric) Student's $t$ distribution.  

Now,  this skew $t$ approaches the skew normal and there are cases where $\lambda_L$ and $\mathcal{K}(\eta,R,\bs{\theta})\to 0$ as $\eta \to \infty$. But it is clear that the expression for $\lambda_L(u)$ deriving from  (\ref{tcvgce}),  in the limit does not seem to  provide useful information about an expression for convergence rate  for the limit. Our treatment of convergence rate for  the  skew $t$ case had worked in a unified way  when $\theta_1$ was not necessarily the same as $\theta_2$. For the skew-normal we confined ourselves for this paper to equi-skewness: that is $\theta_1 = \theta_2, = \theta.$   Even so,  we found that our approach provided  qualitatively and quantitavely different results  in the cases $ \theta > 0$ and $\theta <0$, the latter case  being asymptotically (apart from a constant multiplier) identical  to the  symmetric bivariate normal case $\theta = 0.$ The two cases required quite different approaches. 

Upper tail dependence behaviour is expressed  immediately from our result on lower tail dependence  behaviour, using the device  in  \citet{fung2014}, Section 5. 

The remainder of this  paper is set out as follows. In Section 2, we
derive the asymptotic behaviour of the quantile function for the skew normal which is needed for our subsequent proof.  In Section 3, we derive the rate of convergence in the form of (\ref{RL condition}) for the skew normal distribution.

\section{Asymptotic behaviour of the quantile function}
Under the equal skewness condition i.e. $ \theta_1=\theta_2 =\theta$, both $X_1$ and $X_2$ will have the same marginal distribution. Without loss of generality, we shall focus on $X_1$. The marginal density for $X_1$ is 
\begin{equation}
f_{X_1}(x_1) = 2 \phi(x_1) \Phi(\lambda x_1), \quad \text{for $x_1\in \R$}, \label{pdf:SN_1}
\end{equation}
where $\phi(\cdot)$ and $\Phi(\cdot)$ are the pdf and cdf of the univariate standard normal and 
\begin{equation}
\lambda = \frac{\theta(1+\rho)}{\sqrt{1+\theta^2(1-\rho^2)}}. \label{lambda}
\end{equation}
 This means that both $X_1$ and $X_2$ have a univariate skew normal distribution with skewness parameter $\lambda$ i.e. $X_i \sim SN(\lambda)$, $i=1,2$.

Using Lemma 2 of Capitanio (2010), we have the inequalities for $P(X_1\leq z)$, for $z<0$,
\begin{eqnarray}
\notag && \inv{\pi} e^{-\inv{2}(1+\lambda^2)z^2}\left[ \inv{\lambda(1+\lambda^2)}z^{-2} - \left(\frac{2}{\lambda(1+\lambda^2)} + \inv{\lambda^3(1+\lambda^2)}\right) z^{-4}\right] \\
&<& P(X_1 \leq z)  < \inv{\pi} e^{-\inv{2}(1+\lambda^2)z^2}\left[ \inv{\lambda(1+\lambda^2)}z^{-2} \right] \label{SN tail behaviour lambda>0}
\end{eqnarray}
when $\lambda>0$ and 
\begin{eqnarray}
\notag && \frac{2}{\sqrt{2\pi}}e^{-z^2/2}\Biggl[ |z|^{-1}-\sqrt{\frac{2}{\pi}}\inv{|\lambda|(1+\lambda^2)}z^{-2}e^{-z^2\lambda^2/2} - |z|^{-3}\Biggr] \\
\notag &<& P(X_1\leq z) < \frac{2}{\sqrt{2\pi}}e^{-z^2/2}\Biggl[ |z|^{-1}-\sqrt{\frac{2}{\pi}}\inv{|\lambda|(1+\lambda^2)}z^{-2}e^{-z^2\lambda^2/2} \\
&& \quad + \sqrt{\frac{2}{\pi}} |z|^{-4}e^{-z^2\lambda^2/2}\left(\frac{2}{|\lambda|(1+\lambda^2)^2}+\inv{|\lambda|^3(1+\lambda^2)}\right)\Biggr] \label{SN tail behaviour lambda<0}
\end{eqnarray}
when $\lambda<0$. This means that 
\begin{equation}
F_{1}(z) = P(X_1\leq z) \sim \begin{cases} 
\inv{\pi\lambda(1+\lambda^2)}|z|^{-2} e^{-\inv{2}(1+\lambda^2)z^2}, &\text{for $\lambda>0$}; \\
\sqrt{\frac{2}{\pi}} |z|^{-1} e^{-z^2/2}, \quad \text{for $\lambda<0$}
 \end{cases}\quad \text{as $z\to -\infty$}. \label{SN tail behaviour first order}
 \end{equation}
Recall, for comparison, the cdf of standard normal has the following asymptotic behaviour (see for instance Feller (1968) Chapter VII Lemma 2): 
 \begin{equation}
\Phi(z) \sim \inv{\sqrt{2\pi}} |z|^{-1} e^{-\inv{2}z^2}, \quad \text{as $z\to -\infty$} \label{normal cdf expansion}
\end{equation}
and neither of the expressions on the right-hand side of (\ref{SN tail behaviour first order}) reduces to (\ref{normal cdf expansion}) by letting $\lambda \to 0$.
 
The quantile results are summarised into the following theorem. The asymptotic expressions on the right are given in form convenient for the sequel.

\begin{Theorem}
Let $X_1\sim SN(\lambda)$, then 
\begin{equation*}
F_{1}^{-1}(u) \sim y(u) = \begin{cases}
 -\sqrt{ - \frac{2}{1+\lambda^2}\log(- 2\pi\lambda u \log(2\pi \lambda u))}
, & \text{if $\lambda>0$;}\\
 -\sqrt{-2\log\left(\frac{u}{2}\sqrt{-4\pi \log(\frac{u}{2}\sqrt{2\pi}})\right)}
, & \text{if $\lambda<0$,}
\end{cases} \quad \text{as $u\to 0^+$}.
\end{equation*}
\end{Theorem}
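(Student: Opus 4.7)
The plan is to invert each branch of the tail asymptotic for $F_1$ given in (\ref{SN tail behaviour first order}) by iterated substitution, and then verify that the proposed $y(u)$ satisfies $F_1(y(u))/u\to 1$. Since $-\log F_1(z)$ is asymptotically quadratic in $z$, and since $F_1$ is monotone with $y(u),\,F_1^{-1}(u)\to-\infty$, this will force $F_1^{-1}(u)^2/y(u)^2\to 1$; the common negative sign will then give $F_1^{-1}(u)/y(u)\to 1$.

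For the case $\lambda>0$, I would set $z=F_1^{-1}(u)$ and take logarithms in (\ref{SN tail behaviour first order}) to get
\begin{equation*}
-\log u = \log\bigl(\pi\lambda(1+\lambda^2)\bigr) + 2\log|z| + \tfrac{1}{2}(1+\lambda^2)z^2 + o(1).
\end{equation*}
The quadratic term dominates, so a first pass gives $z^2\sim -2\log u/(1+\lambda^2)$ and hence $2\log|z|=\log(-\log u)+O(1)$. Reinserting this refined estimate produces an expression for $z^2$ of the form $-\frac{2}{1+\lambda^2}\log(-C u\log(C' u))$ for certain constants $C,C'$. To match the precise form stated in the theorem, I would then substitute $y(u)$ directly into (\ref{SN tail behaviour first order}), exploiting the identity $e^{-\frac{1}{2}(1+\lambda^2)y(u)^2}=-2\pi\lambda u\log(2\pi\lambda u)$ built into the definition of $y$, together with the leading estimate $y(u)^2\sim -2\log u/(1+\lambda^2)$ for the polynomial prefactor, and thereby conclude $F_1(y(u))\sim u$.

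For the case $\lambda<0$, combining (\ref{SN tail behaviour first order}) with (\ref{normal cdf expansion}) gives $F_1(z)/\Phi(z)\to 2$, i.e.\ $F_1(z)\sim 2\Phi(z)$ as $z\to-\infty$. By the monotonicity argument sketched above, this reduces the problem to inverting the standard normal tail at $v=u/2$. I would apply the identical iterative log-argument to $v\sqrt{2\pi}|z|\sim e^{-z^2/2}$: extract $|z|\sim\sqrt{-2\log v}$ at the first pass, then reinsert to obtain the nested-logarithm form stated in the theorem, and verify by back-substitution that $\Phi(y(u))\sim u/2$.

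The main obstacle is bookkeeping rather than anything conceptual: the nested form of $y(u)$ written in the theorem is one of several asymptotically equivalent choices (the authors have selected the version that will be convenient in Section~3). The delicate point is therefore to track the constants inside the outer logarithm and the contribution of $2\log|y(u)|$ carefully enough to ensure that this specific $y(u)$ — and not merely something equivalent to it at leading order in $\sqrt{-\log u}$ — actually satisfies $F_1(y(u))/u\to 1$.
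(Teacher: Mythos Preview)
Your proposal is correct and reaches the same conclusion, but the route differs from the paper's in two mechanical respects worth noting.

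First, where you invert the tail asymptotic by iterated substitution (take logs, extract the leading quadratic, reinsert to pick up the $\log\log$ correction), the paper recognises that both branches of (\ref{SN tail behaviour first order}) have the common form $u=a|z|^{b}e^{-c|z|^{d}}$, rewrites this as $we^{w}=\text{const}\cdot u^{-d/|b|}$, and invokes the Lambert $W$ function together with its standard expansion $W(z)=\log z-\log\log z+O(\log\log z/\log z)$. Your iteration is of course exactly what this expansion encodes, so the content is the same; the Lambert route just packages the bookkeeping uniformly for general $b,c,d$ and makes the provenance of the nested-log constants transparent.

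Second, for the implication $F_{1}(y(u))/u\to 1\Rightarrow F_{1}^{-1}(u)\sim y(u)$, the paper cites a general result (Theorem~1 of \citet{fung2011}) whose hypothesis is that $-y(u)$ be slowly varying at $0^{+}$; the paper therefore checks the SVF property via the simplified forms (\ref{asym quantile SN lambda>0 simplified}) and (\ref{asym quantile SN lambda<0 simplified}). Your direct argument---that $-\log F_{1}(z)\sim cz^{2}$ forces $F_{1}^{-1}(u)^{2}/y(u)^{2}\to 1$---is valid and more self-contained, and it sidesteps the need for the external theorem. Your reduction of the $\lambda<0$ case to $\Phi^{-1}(u/2)$ via $F_{1}(z)\sim 2\Phi(z)$ is likewise a clean shortcut the paper does not take explicitly. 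One small clarification: since the theorem only asserts $F_{1}^{-1}(u)\sim y(u)$ and $y(u)\sim -\sqrt{-2\log u/(1+\lambda^{2})}$ in the first case, the ``delicate bookkeeping'' you flag is needed only if you insist on the verification route $F_{1}(y(u))/u\to 1$; your direct quadratic argument already delivers the stated $\sim$ without it. The refined form of $y(u)$ matters for Section~3, not for Theorem~1 itself.
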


\begin{proof}
In order to find the asymptotic behaviour of the quantile functions $F^{-1}_{1}(u)$ as $u\to 0^+$, we shall use Theorem 1 of Fung and Seneta (2011), which requires to find $-y(u)$, a slowly varying function (SVF) as $u\to 0^+$, such that $F_1(y(u))/u \to 1$ as $u\to 0^+$. For then, according to that theorem, $F_1^{-1}(u) \sim y(u)$ as $u\to 0^+$. To proceed, we first note that both asymptotic expression on the right of (\ref{SN tail behaviour first order}) are of the form 
\begin{equation}
u = a|g(u)|^{b} e^{-c|g(u)|^d} , \label{tail behaviour general form}
\end{equation} 
where $a, c, d>0$, $b<0$. To solve for $g(u)$ in (\ref{tail behaviour general form}) as $u\to 0^+$ we use the Lambert $W$ function. 
The function itself is defined as the solution $w$ to $z = we^{w}$ for $z>0$: that is, as the inverse function of the positive continuous and increasing function $xe^x$, $x>0$. Moreover, the asymptotic behaviour of Lambert $W$ function is given by
\begin{equation}
W(z) = \log z - \log \log z + O(\frac{\log\log z}{\log z}),  \label{asym Lambert W}
\end{equation}
as $z\to \infty$ so $W(z) \to \infty$ as $z\to \infty$; see Corless \emph{et al.} (1996). From (\ref{tail behaviour general form}),
\begin{eqnarray*}
u = a|g(u)|^{b} e^{-c|g(u)|^d}  \quad \Rightarrow  \quad \frac{cd}{|b|}|g(u)|^d e^{\frac{cd}{|b|}|g(u)|^d} = \frac{cd}{|b|}\left(\frac{a}{u}\right)^{d/|b|}, \label{tail behaviour general form 2}
\end{eqnarray*}
so the LHS of the last expression is in the form of $we^{w}$. Thus
\begin{eqnarray}
\frac{cd}{|b|}|g(u)|^d = W\left(\frac{cd}{|b|}\left(\frac{a}{u}\right)^{\frac{d}{|b|}}\right) \label{result in Lambert W form}
\end{eqnarray}
where $W(\cdot)$ is the Lambert $W$ function. 
 As $u\to 0^+$, $u^{-\frac{d}{|b|}} \to \infty$ and we can combine (\ref{asym Lambert W}) with (\ref{result in Lambert W form}) to get
\begin{eqnarray}
\notag \frac{cd}{|b|}|g(u)|^d &=& \log\left(\frac{cd}{|b|}\left(\frac{a}{u}\right)^{\frac{d}{|b|}}\right) - \log \log \left(\frac{cd}{|b|}\left(\frac{a}{u}\right)^{\frac{d}{|b|}}\right) + O(\left(\frac{\log \log \left(\frac{cd}{|b|}\left(\frac{a}{u}\right)^{\frac{d}{|b|}}\right)}{\log\left(\frac{cd}{|b|}\left(\frac{a}{u}\right)^{\frac{d}{|b|}}\right)}\right)\\
\Rightarrow \quad |g(u)|& =& \left\{ \frac{|b|}{cd}\left[ \log\left(\frac{\frac{cd}{|b|}\left(\frac{a}{u}\right)^{\frac{d}{|b|}}}{\log \left(\frac{cd}{|b|}\left(\frac{a}{u}\right)^{\frac{d}{|b|}}\right)}\right) + O\left(\frac{\log \log \left(\frac{cd}{|b|}\left(\frac{a}{u}\right)^{\frac{d}{|b|}}\right)}{\log\left(\frac{cd}{|b|}\left(\frac{a}{u}\right)^{\frac{d}{|b|}}\right)}\right)\right] \right\}^{\inv{d}}, \label{solution to Lambert W function}
\end{eqnarray}
as $u\to 0^+$.

Comparing the expression on the right of (\ref{SN tail behaviour first order}) with (\ref{tail behaviour general form}), we have 
$a = \inv{\pi\lambda(1+\lambda^2)}$, $b = -2$, $c = \inv{2}(1+\lambda^2)$ and $d=2$ so that $\frac{cd}{|b|} = \frac{1+\lambda^2}{2}$ for $\lambda>0$; and $a= \sqrt{2/\pi}$, $b= -1$, $c = 1/2$ and $d= 2$ so that $\frac{cd}{|b|} = 1$ for $\lambda<0$. Substitute these constants into (\ref{solution to Lambert W function}) to get {\adb
\begin{eqnarray}
\notag g(u) &=& - \Biggl\{ \frac{2}{(1+\lambda^2)}\Biggl[ \log\left(\frac{\frac{(1+\lambda^2)}{2}(\inv{\pi\lambda(1+\lambda^2) u})}{\log\left(\frac{(1+\lambda^2)}{2}(\inv{\pi\lambda(1+\lambda^2) u})\right)}\right)\\
\notag && \quad + O\left(\frac{\log\log\left(\frac{(1+\lambda^2)}{2}(\inv{\pi\lambda(1+\lambda^2) u})\right)}{\log\left(\frac{(1+\lambda^2)}{2}(\inv{\pi\lambda(1+\lambda^2) u})\right)}\right)\Biggr] \Biggr\}^{\inv{2}}\\
&\sim& - \sqrt{ - \frac{2}{1+\lambda^2}\log(- 2\pi\lambda u \log(2\pi \lambda u))} \label{asym quantile SN lambda>0}\\
&\sim& - \sqrt{-\frac{2}{1+\lambda^2}\log u}, \label{asym quantile SN lambda>0 simplified}
\end{eqnarray}}
as $u\to 0^+$ for $\lambda>0$; and {\adb
\begin{eqnarray}
\notag g(u)&=& - \left\{ \log\left( \frac{\left(\inv{u}\sqrt{\frac{2}{\pi}}\right)^2}{\log\left(\inv{u}\sqrt{\frac{2}{\pi}}\right)^2}\right)+ O\left(\frac{\log\log \left(\inv{u}\sqrt{\frac{2}{\pi}}\right)^2}{\log \left(\inv{u}\sqrt{\frac{2}{\pi}}\right)^2}\right)\right\}^{\inv{2}}\\
&\sim& -\sqrt{-2\log\left(\frac{u}{2}\sqrt{-4\pi \log(\frac{u}{2}\sqrt{2\pi}})\right)} \label{asym quantile SN lambda<0}\\
&\sim& -\sqrt{-2\log u}, \label{asym quantile SN lambda<0 simplified}
\end{eqnarray}}
as $u\to 0^+$ for $\lambda<0$. 
Now set $y(u)$ 
 as the right-hand side of (\ref{asym quantile SN lambda>0}) and (\ref{asym quantile SN lambda<0})  in the respective cases $\lambda>0$ and $\lambda<0$. It is clear that $-y(u)$ is SVF as $u\to 0^+$ from (\ref{asym quantile SN lambda>0 simplified}) and (\ref{asym quantile SN lambda<0 simplified}), and since $y(u) \to -\infty$ as $u\to 0^+$, that $F_1(y(u))/u \to 1$, using (\ref{asym quantile SN lambda>0}), (\ref{asym quantile SN lambda<0}) and the right-hand side of (\ref{SN tail behaviour first order}).
\end{proof}

Notice (and compare with (\ref{asym quantile SN lambda<0})) that the asymptotic behaviour of the quantile function for the standard normal is 
\begin{align}
\Phi^{-1}(u) \sim &  -\sqrt{-2\log(u\sqrt{-4\pi\log u})} \label{asym quantile standard normal} \\
\sim & -\sqrt{-2\log u}, , \quad \text{as $u\to 0^+$.} \notag
\end{align}
(See \citet{fung2011} where the above methodology is used to obtain (\ref{asym quantile standard normal}).)

\section{Main result}
Similarly to the univariate cdf (and the corresponding quantile function), the rate of convergence to zero of the lower and upper tail dependence function depends heavily on whether $\theta>0$ or $\theta<0$. The results are summarised into the following theorem.
\begin{Theorem}
Let $\textbf{X} \sim SN_2(\bs{\theta}, R)$ with $\theta_1 = \theta_2 = \theta$. As $u\to 0^+$, 
\begin{enumerate}
\item[(a)] if $\theta>0$, {\adb
\begin{align}
\lambda_L(u) &\sim u^{\beta^2}\frac{\alpha^3}{\pi \lambda^4\beta(1+\beta^2)^2} \sqrt{\frac{2}{\pi}} (2\pi \lambda)^{1+\beta^2}
(\frac{1+\lambda^2}{2})^{\frac{3}{2}} \left[ -\log  u\right]^{\beta^2-\inv{2}}\label{rate of convergence SN theta>0}
\end{align}}
with $\lambda = \frac{\theta(1+\rho)}{\sqrt{1+\theta^2(1-\rho^2)}}$, $\alpha = \frac{\theta(1+\rho)}{\sqrt{1+2\theta^2(1+\rho)}}$ and $\beta = \sqrt{\frac{(1-\rho)(1+2\theta^2(1+\rho))}{1+\rho}}$;
\item[(b)] if $\theta<0$,
\begin{equation}
\lambda_L(u) \sim u^{\frac{1-\rho}{1+\rho}} \times \frac{1+\rho}{2} \sqrt{\frac{1+\rho}{1-\rho}} (-\pi \log u)^{-\frac{\rho}{1+\rho}}.\label{rate of convergence SN theta<0}
\end{equation}
\end{enumerate}
\end{Theorem}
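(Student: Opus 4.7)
The plan is to start from $\lambda_L(u)=P(X_1\le q_u,X_2\le q_u)/u$ with $q_u:=F_1^{-1}(u)=F_2^{-1}(u)$, the common quantile available under equi-skewness, and to extract the asymptotics of the numerator by Laplace's method; Theorem 1 is then used at the end to translate $q_u^2$-expressions into powers of $u$ and $-\log u$. To expose the corner-type geometry at $(q_u,q_u)$ I first rotate to the decoupled coordinates $s=x_1+x_2$, $t=x_1-x_2$, which diagonalises the bivariate normal quadratic form into $-s^2/(4(1+\rho))-t^2/(4(1-\rho))$, and then shift $s=2q_u-w$ with $w\ge 0$, so that the corner $\{x_1\le q_u,\,x_2\le q_u\}$ becomes the wedge $\{w\ge 0,\ |t|\le w\}$. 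The Jacobian $1/2$ from the rotation conveniently cancels the factor $2$ in the skew-normal density.

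For case (a), $\theta>0$, the argument of $\Phi(\theta(x_1+x_2))$ tends to $-\infty$ over the corner, so I replace $\Phi(\theta s)$ by its Mills-ratio asymptotic $\phi(\theta s)/|\theta s|$. The two Gaussian exponents in $s$ then merge into $-\kappa(2q_u-w)^2/4$ with $\kappa=(1+2\theta^2(1+\rho))/(1+\rho)$. Factoring out $e^{-\kappa q_u^2}$ leaves an integrand behaving like $e^{-\kappa|q_u|w}$ times the inner $t$-integral; since the effective scale of $w$ is $|q_u|^{-1}\to 0$ the Gaussian in $t$ is $1+o(1)$ on $|t|\le w$, so $\int_{-w}^w e^{-t^2/(4(1-\rho))}\,dt\sim 2w$ and $\int_0^\infty 2w\,e^{-\kappa|q_u|w}\,dw=2/(\kappa|q_u|)^2$. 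Collecting prefactors yields
\[
P(X_1\le q_u,X_2\le q_u)\ \sim\ \frac{1}{2\pi\theta\kappa^2\sqrt{2\pi(1-\rho^2)}}\,\frac{e^{-\kappa q_u^2}}{|q_u|^3}.
\]
Inverting $F_1(z)\sim \frac{1}{\pi\lambda(1+\lambda^2)}\,|z|^{-2}e^{-(1+\lambda^2)z^2/2}$ to two orders gives $q_u^2=\frac{2}{1+\lambda^2}\bigl[-\log u-\log(-\log u)-\log(2\pi\lambda)\bigr]+o(1)$, from which $e^{-\kappa q_u^2}\sim u^{1+\beta^2}(-\log u)^{1+\beta^2}(2\pi\lambda)^{1+\beta^2}$ because $2\kappa/(1+\lambda^2)=1+\beta^2$. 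Combining with $|q_u|^{-3}\sim ((1+\lambda^2)/2)^{3/2}(-\log u)^{-3/2}$, dividing by $u$, and rewriting the constant via the identities $\alpha\beta=\theta\sqrt{1-\rho^2}$, $\kappa=\theta^2(1+\rho)/\alpha^2$ and $\lambda^2(1+\beta^2)=2\theta^2(1+\rho)$, produces exactly (\ref{rate of convergence SN theta>0}).

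For case (b), $\theta<0$, the argument $\theta(x_1+x_2)\to+\infty$ throughout the corner, so $\Phi(\theta s)=1+O(e^{-\theta^2 q_u^2})$ and the joint probability collapses, up to exponentially small error, to $2P_N(X_1\le q_u,X_2\le q_u)$, the doubled bivariate normal corner probability. Repeating the Laplace calculation with $\Phi$ replaced by $1$ gives $P_N\sim \frac{(1+\rho)^{3/2}}{2\pi\sqrt{1-\rho}\,q_u^2}\,e^{-q_u^2/(1+\rho)}$. Here $q_u$ is the skew-normal quantile at $\lambda<0$, for which $F_1(z)\sim\sqrt{2/\pi}\,|z|^{-1}e^{-z^2/2}$ (i.e.\ twice the standard normal lower tail); using this tail to convert $e^{-q_u^2/(1+\rho)}$ into $u^{2/(1+\rho)}$ (with $|q_u|^{2/(1+\rho)-2}\sim (-2\log u)^{-\rho/(1+\rho)}$ from $|q_u|^2\sim -2\log u$) and dividing by $u$ yields (\ref{rate of convergence SN theta<0}) after standard algebra.

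The delicate step should be case (a): the final $(-\log u)^{\beta^2-1/2}$ prefactor arises as the arithmetic difference $(1+\beta^2)-3/2$ between the $(-\log u)^{1+\beta^2}$ produced by the $\log\log u$ correction in the quantile expansion and the $(-\log u)^{-3/2}$ coming from $|q_u|^{-3}$, so both the second-order Lambert-$W$ expansion behind Theorem 1 and the Mills-ratio correction to $\Phi$ must be kept to the correct precision, and the resulting constant must be carefully reparametrised in terms of $\alpha,\beta,\lambda$. In case (b) the exponential dominance of $\Phi(\theta s)\to 1$ essentially reduces the problem to the known bivariate normal asymptotic combined with the skew-normal quantile, so the remaining work is largely algebraic.
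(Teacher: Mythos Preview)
Your argument is correct and leads to the stated asymptotics, but it follows a genuinely different path from the paper's proof.

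For case (a) the paper does not work directly with the density of $\mathbf{X}$. Instead it invokes the normal mean mixture representation $\mathbf{X}\overset{d}{=}\alpha V+\mathbf{Z}$ with $V$ half-normal and $\mathbf{Z}\sim N(\mathbf{0},\Psi)$, observes (via Roberts/Loperfido) that $\max(Z_1^*,Z_2^*)\sim SN(\beta)$, applies the Capitanio bounds to this univariate skew-normal, and then integrates the resulting one-dimensional envelope against the half-normal density of $V$ using integration by parts and dominated convergence. Your route---rotating to $(s,t)$, replacing $\Phi(\theta s)$ by its Mills ratio, and evaluating the wedge integral by a Watson-type expansion---is more direct and entirely self-contained; it avoids both the mixture representation and the $\max$--skew-normal identity, at the cost of having to justify the Laplace step (uniform replacement of $\Phi$ and of $\int_{-w}^w e^{-t^2/(4(1-\rho))}dt$ by $2w$) with explicit error bounds. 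The algebraic identities you list, $\alpha\beta=\theta\sqrt{1-\rho^2}$, $\kappa=\theta^2(1+\rho)/\alpha^2$, $2\kappa/(1+\lambda^2)=1+\beta^2$, do indeed reconcile your constant with the paper's $\frac{\alpha^3}{\pi\lambda^4\beta(1+\beta^2)^2}\sqrt{2/\pi}$.

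For case (b) the paper proceeds via the copula derivative $\frac{d}{du}C(u,u)=2P(X_2\le x\mid X_1=x)$, sandwiches the conditional skew factor between $\Phi(2\theta x)$ and $1$, shows $[F_1^{-1}(u)]^2-[y(u)]^2\to 0$, and recovers $\lambda_L(u)$ from de Haan's Tauberian theorem. Your reduction to $2P_N$ by the uniform bound $1-\Phi(\theta s)\le 1-\Phi(2|\theta||q_u|)$ is a valid shortcut and gives the same answer without passing through the conditional copula or the integration step. One point to make explicit: Theorem~1 as stated gives only $F_1^{-1}(u)\sim y(u)$, whereas your substitution of $q_u^2$ into $e^{-\kappa q_u^2}$ (and likewise into $e^{-q_u^2/(1+\rho)}$) requires the sharper $q_u^2-y(u)^2\to 0$; you effectively rederive this from the Capitanio tail with $O(z^{-2})$ relative error, which is fine, but it is worth stating since the paper singles out precisely this issue in its own case (b).

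In short, the paper exploits probabilistic structure (mixture, $\max$ of normals, copula derivative), while you run a single Laplace-on-the-density computation twice. Both are sound; yours is more elementary and uniform across the two cases, the paper's is more modular and makes the appearance of $\beta$ as a skewness parameter transparent.
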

\begin{proof}
The proof will be divided into two parts depends on whether $\theta>0$ or $\theta<0$. We will first consider the case $\theta>0$ and mixture representation forms the basis of this proof. 

For a given pair of $(\bosy{\theta},\Sigma)$ in (\ref{pdf:SN_n}), it has been shown in Azzalini and Capitanio (1999) that there is a pair of $(\bs{\alpha},\Psi)$ such that $\textbf{X}$ can be represented as a normal mean mixture by 
\begin{equation}
\textbf{X} \overset{d}{=} \bosy{\alpha}V+ \textbf{Z}, \label{normal mean mixing}
\end{equation}
where 
\begin{equation}
\bosy{\alpha} = \frac{R\bosy{\theta}}{\sqrt{1+\bosy{\theta}^TR\bosy{\theta}}} = \left(\begin{matrix} \frac{\theta(1+\rho)}{\sqrt{1+2\theta^2(1+\rho)}}, & \frac{\theta(1+\rho)}{\sqrt{1+2\theta^2(1+\rho)}}\end{matrix}\right)^T = (\alpha, \alpha)^T; \label{alpha}
\end{equation}
and 
\begin{equation*}
\Psi = R- (1+\bosy{\theta}^TR\bosy{\theta})^{-1}R\bosy{\theta}\bosy{\theta}^TR= \left(\begin{smallmatrix} \frac{1+\theta^2(1-\rho^2)}{1+2\theta^2(1+\rho)} & \frac{\rho - \theta^2(1-\rho^2)}{1+2\theta^2(1+\rho)} \\ \frac{\rho - \theta^2(1-\rho^2)}{1+2\theta^2(1+\rho)}  & \frac{1+\theta^2(1-\rho^2)}{1+2\theta^2(1+\rho)}\end{smallmatrix}\right) \\
= \left(\begin{smallmatrix} \frac{\alpha^2}{\lambda^2} & \frac{\rho - \theta^2(1-\rho^2)}{1+2\theta^2(1+\rho)} \\ \frac{\rho - \theta^2(1-\rho^2)}{1+2\theta^2(1+\rho)}  & \frac{\alpha^2}{\lambda^2}\end{smallmatrix}\right). 
\end{equation*}
Note that $\theta>0$ implies that $\alpha>0$. This parametrisation also satisfies the condition that $\Psi$ is symmetric and positive definite. $\bs{Z} \sim N(\bs{0}, \Psi)$ is the bivariate normal with $\bs{0}$ mean and covariance matrix $\Psi$, and $V\sim $ Half Normal with pdf 
\begin{eqnarray}
f_V(v) &=& \sqrt{\frac{2}{\pi}} e^{-\frac{v^2}{2}}, \quad \text{if $v>0$;} \quad = 0, \quad \text{otherwise.} \label{pdf:half normal}
\end{eqnarray}
That is $V = |W|$, with $W \sim N(0,1)$. $V$ is assumed to be distributed independently of $\textbf{Z}$. 
Obviously, when $\bosy{\theta}=\textbf{0}$ (and equivalently $\bosy{\alpha}=\textbf{0}$), we have the usual (symmetric) multivariate normal as special case for the distribution of $\textbf{X}$.

Continuing, when $\theta>0$ {\adb
\begin{eqnarray}
\notag && P(X_1\leq F_1^{-1}(u), X_2 \leq F_2^{-1}(u))\\
\notag&=& P(X_1 \leq x, X_2 \leq x), \quad \text{where $x = x(u) = F_1^{-1}(u) = F_2^{-1}(u)$}\\
\notag&=& E_V( P(\alpha V+ Z_1 \leq x, \, \alpha V+ Z_2 \leq x))\\
\notag&=& E_V\left(P\left(\frac{Z_1}{\alpha/\lambda} \leq \frac{x - \alpha V}{\alpha/\lambda} , \frac{Z_2}{\alpha/\lambda}  \leq \frac{x - \alpha V}{\alpha/\lambda} \right)\right)\\
\notag &=&  E_V\left(P\left(Z_1^* \leq \frac{x - \alpha V}{\alpha/\lambda} , Z_2^* \leq \frac{x - \alpha V}{\alpha/\lambda} \right)\right)\\
 &=& E_V\left(P\left( \max(Z_1^*, Z_2^*)  \leq \frac{x - \alpha V}{\alpha/\lambda} \right)\right) \label{SN max}
\end{eqnarray}}
where we define
\begin{equation*}
\textbf{Z}^* = (Z_1^*, Z_2^*)^T = \left(\begin{matrix} \frac{Z_1}{\alpha/\lambda}, & \frac{Z_2}{\alpha/\lambda}\end{matrix} \right)^T \sim N(\bs{0}, \left(\begin{smallmatrix} 1 & \frac{\rho - \theta^2(1-\rho^2)}{1+\theta^2(1-\rho^2)}\\ \frac{\rho - \theta^2(1-\rho^2)}{1+\theta^2(1-\rho^2)} & 1 \end{smallmatrix}\right)).
\end{equation*}
Using the results from Roberts (1966) and Loperfido (2002), we know that $Z_{(2)}^* = \max( Z_1^*, Z_2^*) \sim SN(\beta)$ i.e. a univariate skew normal distribution with skewness  
\begin{equation}
\beta = \sqrt{\frac{1- \frac{\rho - \theta^2(1-\rho^2)}{1+\theta^2(1-\rho^2)}}{1+ \frac{\rho - \theta^2(1-\rho)}{1+\theta^2(1-\rho^2)}}} = \sqrt{\frac{(1-\rho)(1+2\theta^2(1+\rho))}{(1+\rho)}}. \label{eqn: beta}
\end{equation}
If we combine this fact with the Capitanio bounds in (\ref{SN tail behaviour lambda>0}), (\ref{SN max}) becomes
{\adb
\begin{eqnarray}
\notag && \int^{\infty}_{0} \inv{\pi} e^{-\inv{2}(1+\beta^2)\left(\frac{x-\alpha v}{\alpha/\lambda}\right)^2}\times \inv{\beta(1+\beta^2)}\left(\frac{x-\alpha v}{\alpha/\lambda}\right)^{-2} f_V(v)\,dv \\
\notag &-& \int^{\infty}_{0} \inv{\pi} e^{-\inv{2}(1+\beta^2)\left(\frac{x-\alpha v}{\alpha/\lambda}\right)^2}\left(\frac{2}{\beta(1+\beta^2)}+\inv{\beta^3(1+\beta^2)}\right)\left(\frac{x-\alpha v}{\alpha/\lambda}\right)^{-4} f_V(v)\,dv \\
\notag &<& E_V\left(P\left( Z_{(2)}^* \leq \frac{x - \alpha V}{\alpha/\lambda} \right)\right)\\
&<&  \int^{\infty}_{0} \inv{\pi} e^{-\inv{2}(1+\beta^2)\left(\frac{x-\alpha v}{\alpha/\lambda}\right)^2}\inv{\beta(1+\beta^2)}\left(\frac{x-\alpha v}{\alpha/\lambda}\right)^{-2} f_V(v)\,dv,  \label{eqn: the inequality}
\end{eqnarray}}
which suggests that we need to compare the upper bound in (\ref{eqn: the inequality}) with 
\begin{equation*}
|x|^{-3} e^{-\frac{\lambda^2}{2\alpha^2}(1+\beta^2)x^2},
\end{equation*}
and we will consider 
\begin{eqnarray*}
&& \frac{\int^{\infty}_{0} \inv{\pi} e^{-\inv{2}(1+\beta^2)\left(\frac{x-\alpha v}{\alpha/\lambda}\right)^2}\times \inv{\beta(1+\beta^2)}\left(\frac{x-\alpha v}{\alpha/\lambda}\right)^{-2} f_V(v)\,dv }{|x|^{-3} e^{-\frac{\lambda^2}{2\alpha^2}(1+\beta^2)x^2}}\\
&=& |x| \int^{\infty}_{0} \inv{\pi} e^{-\frac{\lambda^2}{2\alpha^2}(1+\beta^2)(-2\alpha x v +\alpha^2 v^2)}\times \frac{\alpha^2}{\lambda^2\beta(1+\beta^2)} \inv{(1+\alpha\frac{v}{|x|})^2}f_V(v)\,dv
\end{eqnarray*}
Applying integration by parts: {\adb
\begin{align}
\notag =& \frac{\alpha^2|x|}{\pi \lambda^2\beta(1+\beta^2)}\bigg\{ 
\left[ \frac{\alpha}{\lambda^2(1+\beta^2)x} e^{\frac{\lambda^2}{\alpha}(1+\beta^2)xv} \times \frac{e^{-\lambda^2(1+\beta^2)(\frac{v^2}{2})} f_V(v)}{(1+\alpha\frac{v}{|x|})^2}\right]^{\infty}_{0}\\
\notag  & \quad - \int^{\infty}_{0} \frac{\alpha}{\lambda^2(1+\beta^2) x}e^{\frac{\lambda^2}{\alpha}(1+\beta^2)xv} \times \frac{d}{dv}\left(\frac{e^{-\lambda^2(1+\beta^2)(\frac{ v^2}{2})} f_V(v)}{(1+\alpha\frac{v}{|x|})^2}\right)\,dv\bigg\}\\
\notag =& \frac{\alpha^2}{\pi \lambda^2\beta(1+\beta^2)}\bigg\{ 
 \frac{\alpha f_V(0)}{\lambda^2(1+\beta^2)} + \int^{\infty}_{0} \frac{\alpha}{\lambda^2(1+\beta^2)} e^{\frac{\lambda^2}{\alpha}(1+\beta^2)xv} \\
& \quad \times \frac{d}{dv}\left(\frac{e^{- \lambda^2(1+\beta^2)(\frac{ v^2}{2})} f_V(v)}{(1+\alpha\frac{v}{|x|})^2}\right)\,dv\bigg\} \label{being dominated 1}
\end{align}}
As {\adb
\begin{eqnarray*}
&& \left| \frac{d}{dv}\left(\frac{e^{-\lambda^2(1+\beta^2)(\frac{ v^2}{2})} f_V(v)}{(1+\alpha\frac{v}{|x|})^2}\right)\right| \\
&=& \Bigg| \frac{-2}{(1+\alpha \frac{v}{|x|})^3}\frac{\alpha}{|x|}e^{-\lambda^2(1+\beta^2)(\frac{ v^2}{2})} f_V(v) + \inv{(1+\alpha\frac{v}{|x|})^2}\left[ - \lambda^2(1+\beta^2)v\right]\\
&& \quad \times  e^{-\lambda^2(1+\beta^2)(\frac{v^2}{2})} f_V(v)+ \frac{e^{-\lambda^2(1+\beta^2)(\frac{v^2}{2})}}{(1+\alpha\frac{v}{|x|})^2} \frac{d}{dv} f_V(v)\Bigg| \\
&\leq& e^{-\lambda^2(1+\beta^2)(\frac{v^2}{2})}\left(2\alpha f_V(v) + \lambda^2(1+\beta^2)v f_V(v) + \bigg|\frac{d}{dv} f_V(v)\bigg|\right) < \infty,
\end{eqnarray*}}
and $\frac{d}{dv} f_V(v) = \sqrt{\frac{2}{\pi}} (-v)e^{-v^2/2},$
we only need 
\begin{equation*}
\int^{\infty}_{0} ve^{-\xi v^2}\,dv <\infty
\end{equation*}
for some $\xi>0$ to have dominated convergence in (\ref{being dominated 1}) and the condition is obviously true. Thus, {\adb
\begin{eqnarray*}
&& \lim_{x\to -\infty} \frac{\int^{\infty}_{0} \inv{\pi} e^{-\inv{2}(1+\beta^2)\left(\frac{x-\alpha v}{\alpha/\lambda}\right)^2}\times \inv{\beta(1+\beta^2)}\left(\frac{x-\alpha v}{\alpha/\lambda}\right)^{-2} f_V(v)\,dv }{|x|^{-3}e^{-\frac{\lambda^2}{2\alpha^2}(1+\beta^2)x^2}}= \frac{\alpha^3f_V(0)}{\pi \lambda^4\beta(1+\beta^2)^2},
\end{eqnarray*}}
which also implies that {\adb
\begin{align}
\notag & \int^{\infty}_{0} \inv{\pi} e^{-\inv{2}(1+\beta^2)\left(\frac{x-\alpha v}{\alpha/\lambda}\right)^2}\times \inv{\beta(1+\beta^2)}\left(\frac{x-\alpha v}{\alpha/\lambda}\right)^{-2} f_V(v)\,dv\\
\sim& \frac{\alpha^3}{\pi \lambda^4\beta(1+\beta^2)^2}\sqrt{\frac{2}{\pi}} |x|^{-3}e^{-\frac{\lambda^2}{2\alpha^2}(1+\beta^2)x^2}.  \label{eqn: the inequality dominating term}
\end{align}}
\indent As the first term in the lower bound is the same as the upper bound in (\ref{eqn: the inequality}), we will now consider the higher order term in the lower bound in the form 
\begin{align*}
& \frac{\int^{\infty}_{0} \inv{\pi} e^{-\inv{2}(1+\beta^2)\left(\frac{x-\alpha v}{\alpha/\lambda}\right)^2}\times \left(\frac{2}{\beta(1+\beta^2)}+\inv{\beta^3(1+\beta^2)}\right)\left(\frac{x-\alpha v}{\alpha/\lambda}\right)^{-4} f_V(v)\,dv}{|x|^{-5} e^{-\frac{\lambda^2}{2\alpha^2}(1+\beta^2)x^2}}\\
=& |x| \int^{\infty}_{0} \inv{\pi} e^{-\frac{\lambda^2}{2\alpha^2}(1+\beta^2)(-2\alpha v x +\alpha^2 v^2)}\times \left(\frac{2}{\beta(1+\beta^2)}+\inv{\beta^3(1+\beta^2)}\right)\left(\frac{\alpha^4}{\lambda^4}\right)\frac{f_V(v)}{(1+\alpha \frac{v}{|x|})^4}\,dv\\
\to&\frac{\alpha^5}{\pi\lambda^6(1+\beta^2)}\left(\frac{2}{\beta(1+\beta^2)}+\inv{\beta^3(1+\beta^2)}\right)f_V(0)
\end{align*}
after integration by parts and dominated convergence again as $x\to -\infty$.
Thus {\adb
\begin{eqnarray}
\notag && \int^{\infty}_{0} \inv{\pi} e^{-\inv{2}(1+\beta^2)\left(\frac{x-\alpha v}{\alpha/\lambda}\right)^2}\times \left(\frac{2}{\beta(1+\beta^2)}+\inv{\beta^3(1+\beta^2)}\right)\left(\frac{x-\alpha v}{\alpha/\lambda}\right)^{-4} f_V(v)\,dv\\
&=& O\left(|x|^{-5}e^{-\frac{\lambda^2}{2\alpha^2}(1+\beta^2)x^2}\right) \label{eqn: the inequality correcting term}
\end{eqnarray}}
as $x\to -\infty$. By combining (\ref{eqn: the inequality}), (\ref{eqn: the inequality dominating term}) and (\ref{eqn: the inequality correcting term}), we have 
\begin{equation*}
P(X_1 \leq x, X_2 \leq x) \sim \frac{\alpha^3}{\pi \lambda^4\beta(1+\beta^2)^2} \sqrt{\frac{2}{\pi}} |x|^{-3} e^{-\frac{\lambda^2}{2\alpha^2}(1+\beta^2)x^2},
\end{equation*}
as $x\to -\infty$ and by substituting in (\ref{asym quantile SN lambda>0}) we have {\adb
\begin{eqnarray*}
&& P(X_1\leq F_1^{-1}(u), X_2\leq F_2^{-1}(u)) \\
&\sim& \frac{\alpha^3}{\pi \lambda^4\beta(1+\beta^2)^2} \sqrt{\frac{2}{\pi}} \left(- \frac{2}{1+\lambda^2}\log(- 2\pi\lambda u \log(2\pi \lambda u))\right)^{-\frac{3}{2}}\\
&& \quad \times e^{-\frac{\lambda^2}{2\alpha^2}(1+\beta^2)\left(-\frac{2}{1+\lambda^2}\log(- 2\pi\lambda u \log(2\pi \lambda u))\right)}\\
&\sim& \frac{\alpha^3}{\pi \lambda^4\beta(1+\beta^2)^2}  \sqrt{\frac{2}{\pi}} (2\pi \lambda)^{\frac{\lambda^2(1+\beta^2)}{\alpha^2(1+\lambda^2)}}
(\frac{1+\lambda^2}{2})^{\frac{3}{2}} 
\left[ -\log  u\right]^{\frac{\lambda^2(1+\beta^2)}{\alpha^2(1+\lambda^2)}-\frac{3}{2}}u^{\frac{\lambda^2(1+\beta^2)}{\alpha^2(1+\lambda^2)}}\\
&=& \frac{\alpha^3}{\pi \lambda^4\beta(1+\beta^2)^2} \sqrt{\frac{2}{\pi}} (2\pi \lambda)^{1+\beta^2}
(\frac{1+\lambda^2}{2})^{\frac{3}{2}} \left[ -\log  u\right]^{\beta^2-\inv{2}}  \times u^{1+\beta^2}.
\end{eqnarray*}}
as
\begin{eqnarray*}
\alpha^2(1+\lambda^2) &=& \frac{\theta^2(1+\rho)^2}{1+2\theta^2(1+\rho)}\left(1+\frac{\theta^2(1+\rho^2)}{1+\theta^2(1-\rho^2)}\right) \\
&=& \frac{\theta^2(1+\rho)^2}{1+2\theta^2(1+\rho)}\left(\frac{1+2\theta^2(1+\rho)}{1+\theta^2(1-\rho^2)}\right) = \lambda^2 
\end{eqnarray*}
since $\lambda = \frac{\theta(1+\rho)}{\sqrt{1+\theta^2(1-\rho^2)}}$ from (\ref{lambda}) and $\alpha = \frac{\theta(1+\rho)}{\sqrt{1+2\theta^2(1+\rho)}}$ from (\ref{alpha}). Finally,  from (\ref{defn:tail dependence}){\adb
\begin{eqnarray*}
&& P(X_1\leq F_1^{-1}(u) | X_2\leq F_2^{-1}(u)) = \frac{P(X_1\leq F_1^{-1}(u), X_2\leq F_2^{-1}(u))}{u} \\
\notag &\sim& \frac{\alpha^3}{\pi \lambda^4\beta(1+\beta^2)^2} \sqrt{\frac{2}{\pi}} (2\pi \lambda)^{1+\beta^2}
(\frac{1+\lambda^2}{2})^{\frac{3}{2}} \left[ -\log  u\right]^{\beta^2-\inv{2}} \times u^{\beta^2},
\end{eqnarray*}}
which is (\ref{rate of convergence SN theta>0}) and part (a) of the proof is now completed.

Next, we consider the case $\theta<0$. In this part of the proof, we proceed by noting that from (\ref{defn:tail dependence 3}) that 
$
C(u,u)= \int_0^u \frac{dC(z,z)}{dz}dz $, so that if ${\frac{dC(z,z)}{dz}}= z^{\tau}L(z),
\tau > 0 $ where $L(z)$ is a slowly varying function as $z \to
0^+$, then by (applying with suitable transformation to regular
variation at $0$) a result of de Haan (see Seneta (1976), p. 87), we
obtain
\begin{equation}
\frac{C(u,u)}{u} = \inv{u}\int_0^u {\frac{dC(z,z)}{dz}}dz \sim  {\frac{u^{\tau}L(u)}{\tau + 1}}, \, \, u \to 0^+ .\label{defn:tail dependence13}\end{equation}
Therefore, it is sufficient for us to find a value of $\tau > 0$
which satisfies
$\frac{d C(u,u)}{du} = u^{\tau} L(u)$, for some slowly varying function $L(u)$, as $u\rightarrow 0^+$, so
that (\ref{defn:tail dependence13}) holds.

From (\ref{defn:tail dependence}), using L'H\^{o}pital's rule and some well
established basic properties of the derivative of copulas (see Nelsen (2006),
pp.13, 41), we have
\begin{eqnarray}
\notag \frac{d C(u,u)}{du}&=& P(X_2 \leq F_2^{-1}(u)|X_1 =F_1^{-1}(u))
+P(X_1 \leq F_1^{-1}(u)|X_2 =F_2^{-1}(u))\\
&=& 2 P(X_2 \leq x |X_1 =x), \label{L'Hopital's rule}
\end{eqnarray}
by letting $x = F_1^{-1}(u) = F_2^{-1}(u)$. By using (\ref{pdf:SN_n}) and (\ref{pdf:SN_1}), the last expression can be written as {\adb
\begin{eqnarray*}
&& 2 P(X_2\leq  x | X_1 = x)  = 2 \int^{x}_{-\infty} \inv{\sqrt{2\pi(1-\rho^2)}}e^{-\frac{(x_2-\rho x)^2}{2(1-\rho^2)}}\frac{\Phi(\theta x + \theta x_2)}{\Phi(\lambda x)}\,dx_2\\
&=& 2 \int^{\sqrt{\frac{1-\rho}{1+\rho}}x}_{-\infty} \inv{\sqrt{2\pi}}e^{-\inv{2}z^2}\frac{\Phi(\theta\sqrt{1-\rho^2}z+\theta(1+\rho)x)}{\Phi(\lambda x)}\,dz, \quad \text{by letting $z = \frac{x_2-\rho x}{\sqrt{1-\rho^2}}$.}
\end{eqnarray*}}
Since $\theta<0$, we have $\lambda = \frac{\theta(1+\rho)}{\sqrt{1+\theta^2(1-\rho^2)}}<0,$ which implies that $\Phi(\lambda x) \to 1$ as $x\to -\infty$, that is, when $u\to 0^+$. Moreover, {\adb
\begin{align*}
 \quad -\infty < z < \sqrt{\frac{1-\rho}{1+\rho}} x \quad &\Rightarrow \quad \theta(1-\rho)x < \theta\sqrt{1-\rho^2}z < \infty, \quad \text{as $\theta<0$;}\\
 &\Rightarrow \quad 2\theta x < \theta\sqrt{1-\rho^2}z+\theta(1+\rho)x < \infty\\
 &\Rightarrow \quad \Phi(2\theta x) < \Phi(\theta\sqrt{1-\rho^2}z+\theta(1+\rho)x) < 1.
\end{align*}}
As a result, 
\begin{eqnarray*}
2\int^{\sqrt{\frac{1-\rho}{1+\rho}}x}_{-\infty} \inv{\sqrt{2\pi}}e^{-\inv{2}z^2}\frac{\Phi(2\theta x)}{\Phi(\lambda x)}\,dz < 2 P(X_2\leq x | X_1 = x)  < 2\int^{\sqrt{\frac{1-\rho}{1+\rho}}x}_{-\infty} \inv{\sqrt{2\pi}}\frac{e^{-\inv{2}z^2}}{\Phi(\lambda x)}\,dz.
\end{eqnarray*}
Since both $\Phi(2\theta x)$ and $\Phi(\lambda x) \to 1$ as $x\to -\infty$ we have {\adb
\begin{eqnarray}
\notag && 2 P(X_2\leq x | X_1 = x)  \sim  2\int^{\sqrt{\frac{1-\rho}{1+\rho}}x}_{-\infty} \inv{\sqrt{2\pi}}e^{-\inv{2}z^2}\,dz = 2 \Phi(\sqrt{\frac{1-\rho}{1+\rho}}x)\\
\notag \text{so} && \quad 2 P(X_2\leq  F_2^{-1}(u)| X_1 = F_1^{-1}(u)) \sim  2 \Phi(\sqrt{\frac{1-\rho}{1+\rho}} F_1^{-1}(u)),
\end{eqnarray}}
since $F_1^{-1}(u) \to -\infty$ as $u\to 0^+$. 
Since $F_1^{-1}(u)\sim y(u)$ as $u\to 0^+$ from Theorem 1 and using (\ref{normal cdf expansion}), we can prove $2 \Phi(\sqrt{\frac{1-\rho}{1+\rho}} F_1^{-1}(u)) \sim 2 \Phi(\sqrt{\frac{1-\rho}{1+\rho}} y(u))$ as $u\to 0^+$
by showing $[F_1^{-1}(u)]^2 - [y(u)]^2 \to 0$,  as $u\to 0^+$.
This is in turn equivalent to showing  
\begin{equation*}
e^{[F_1^{-1}(F_1(z))]^2 - [y(F_1(z))]^2} = e^{z^2 + 2\log(\frac{F_1(z)}{2} \sqrt{-4\pi \log(\frac{F_1(z)}{2}\sqrt{2\pi}}} \to 1, \quad \text{as $z\to -\infty$}. 
\end{equation*}
Then 
\begin{eqnarray*}
&& e^{z^2 + 2\log(\frac{F_1(z)}{2} \sqrt{-4\pi \log(\frac{F_1(z)}{2}\sqrt{2\pi})})} \\
&\sim& 
e^{z^2} \times (\inv{\sqrt{2\pi}} |z|^{-1} e^{-z^2/2})^2\times (-4\pi \log(|z|^{-1} e^{-z^2/2})),\quad \text{by (\ref{SN tail behaviour first order})}\\
&=& 1+ \frac{4\pi \log |z|}{2\pi z^2} \to 1,\quad \text{as $z \to -\infty$}.
\end{eqnarray*}
This implies that 
\begin{equation*}
2 P(X_2\leq  F_2^{-1}(u)| X_1 = F_1^{-1}(u)) \sim  2 \Phi(\sqrt{\frac{1-\rho}{1+\rho}} y(u)) \sim \sqrt{\frac{1+\rho}{1-\rho}} (-\pi \log u)^{-\frac{\rho}{1+\rho}} u^{\frac{1-\rho}{1+\rho}}
\end{equation*}
as $u\to 0^+$, by using (\ref{normal cdf expansion}) and (\ref{asym quantile SN lambda<0}). Finally, by using (\ref{defn:tail dependence13}) we have 
\begin{equation*}
P(X_2\leq F_2^{-1}(u) | X_1\le F_1^{-1}(u)) \sim \frac{1+\rho}{2} \sqrt{\frac{1+\rho}{1-\rho}} (-\pi \log u)^{-\frac{\rho}{1+\rho}} u^{\frac{1-\rho}{1+\rho}}, \quad \text{as $u\to 0^+$}.
\end{equation*}
\end{proof}
The theorem shows that when $\theta<0$ there is minimal difference between symmetric and skew normal in terms of the intermediate tail dependence as they share the same regular varying index which is $\frac{1-\rho}{1+\rho}$. On the other hand, when $\theta>0$, it has a larger regular varying index by a factor of $(1+2\theta^2(1+\rho))$ when compared to the normal case and therefore skew normal has smaller intermediate tail dependence than the normal in the lower tail. 

We shall finish the paper by briefly discussing the corresponding result for the upper tail dependence. 
\begin{Corollary}Let $\textbf{X} \sim SN_2(\bs{\theta}, R)$ with $\theta_1 = \theta_2 = \theta$. In self evident notation and as $u\to 1^-$, 
\begin{enumerate}
\item[(a)] if $\theta< 0$ (so $-\theta = |\theta|>0$), {\adb
\begin{align*}
\notag & \lambda_U(u)  = P(X_1 \geq F_1^{-1}(u) | X_2 \geq F_2^{-1}(u))\\\
&\sim (1-u)^{\beta^2}\frac{\alpha^3}{\pi \lambda^4\beta(1+\beta^2)^2} \sqrt{\frac{2}{\pi}} (2\pi \lambda)^{1+\beta^2}
(\frac{1+\lambda^2}{2})^{\frac{3}{2}} \left[ -\log (1-u)\right]^{\beta^2-\inv{2}}
\end{align*}}
with $\lambda = \frac{|\theta|(1+\rho)}{\sqrt{1+\theta^2(1-\rho^2)}}$, $\alpha = \frac{|\theta|(1+\rho)}{\sqrt{1+2\theta^2(1+\rho)}}$ and $\beta = \sqrt{\frac{(1-\rho)(1+2\theta^2(1+\rho))}{1+\rho}}$;
\item[(b)] if $\theta> 0$ (so $-\theta<0$),
\begin{equation*}
\lambda_U(u) \sim (1-u)^{\frac{1-\rho}{1+\rho}} \times \frac{1+\rho}{2} \sqrt{\frac{1+\rho}{1-\rho}} (-\pi \log(1- u))^{-\frac{\rho}{1+\rho}}.
\end{equation*}
\end{enumerate}
\end{Corollary}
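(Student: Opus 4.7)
The plan is to reduce the upper-tail statement to Theorem~2 by reflecting $\textbf{X}$ about the origin, which is the ``device'' of \citet{fung2014}, Section 5. The first step is to verify that the skew-normal family is closed under the map $\textbf{x}\mapsto -\textbf{x}$: from the density (\ref{pdf:SN_n}), since $\phi_2(-\textbf{x},R)=\phi_2(\textbf{x},R)$ and $\Phi(\bs{\theta}^T(-\textbf{x}))=\Phi((-\bs{\theta})^T\textbf{x})$, the change-of-variable immediately gives $-\textbf{X}\sim SN_2(-\bs{\theta},R)$. Under the equi-skewness assumption this means $\textbf{Y}:=-\textbf{X}$ is an equi-skew skew normal with common skewness parameter $-\theta$ and the same correlation matrix $R$.

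Next I would translate the upper tail of $\textbf{X}$ into the lower tail of $\textbf{Y}$ using the standard identity $F_{Y_i}^{-1}(v)=-F_{X_i}^{-1}(1-v)$, which turns the event $\{X_i\geq F_i^{-1}(u)\}$ into $\{Y_i\leq F_{Y_i}^{-1}(1-u)\}$. Hence
\begin{equation*}
\lambda_U(u;\textbf{X}) \;=\; \lambda_L(1-u;\textbf{Y}),
\end{equation*}
and everything reduces to applying Theorem~2 to $\textbf{Y}$ with the argument $1-u$ in place of $u$.

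In part~(a) of the corollary, $\theta<0$ translates to the \emph{positive} common skewness $-\theta=|\theta|>0$ for $\textbf{Y}$, so Theorem~2(a) with the skewness replaced by $|\theta|$ applies to $\lambda_L(1-u;\textbf{Y})$; the constants $\lambda$ and $\alpha$ pick up $|\theta|$ in their numerators (and so remain positive as the constructions in (\ref{lambda}) and (\ref{alpha}) require), while $\beta$, which depends only on $\theta^2$, is unchanged, so one recovers exactly the expression in the corollary. In part~(b), $\theta>0$ gives $-\theta<0$ for $\textbf{Y}$, so Theorem~2(b) applies; the limiting formula there depends only on $\rho$, so the sole change relative to (\ref{rate of convergence SN theta<0}) is the replacement $u\mapsto 1-u$.

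I do not anticipate any genuine obstacle: the argument is a symmetry transfer from Theorem~2 and the work is essentially bookkeeping. The one step that warrants mild care is the verification that the constants in part~(a) are correctly stated in terms of $|\theta|$ rather than $\theta$, and this is immediate from the dependence of Theorem~2(a)'s formulas on the sign of the skewness.
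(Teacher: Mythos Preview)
Your proposal is correct and is essentially the same as the paper's proof: the paper invokes Lemma~1 of \citet{fung2014} for the identity $\lambda_U^{\textbf{X}}(u)=\lambda_L^{\textbf{Y}}(1-u)$ and cites \citet{azza1996} for $-\textbf{X}\sim SN_2(-\bs{\theta},R)$, whereas you supply short direct verifications of both facts, and then both arguments simply apply Theorem~2 to $\textbf{Y}$ at $1-u$.
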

\begin{proof}
If $\textbf{Y}  = (Y_1, Y_2)^T \overset{d}{=} - \textbf{X}$ where $\textbf{X} = (X_1, X_2)^T$ with continuous marginal distributions, then from Lemma 1 of Fung and Seneta (2014) (with self-evident notation), we have $\lambda_U^{\textbf{X}}(u) = \lambda_L^{\textbf{Y}}(1-u)$. The proof is completed by noting that when $\textbf{X} \sim SN_2(\bs{\theta}, R)$, we have $\textbf{Y} = -\textbf{X} \sim SN_2(-\bs{\theta}, R)$ from Azzalini and Dalla Valle (1996) and applying Theorem 2.
\end{proof}

\end{document}